\documentclass[12pt]{article}
\usepackage{amsfonts}
\usepackage{amsmath}
\usepackage{amssymb}
\usepackage{amsthm}
\usepackage{mathrsfs}
\usepackage{graphics}
%\usepackage{makeidx}
%\usepackage{showkeys}

%\renewcommand{\baselinestretch}{1.3}
%\setlength{\textheight}{22cm}
%\addtolength{\textwidth}{2cm}
%\setlength{\topmargin}{-0.5cm}
%\addtolength{\oddsidemargin}{.5cm}
%\addtolength{\evensidemargin}{-2cm}
%\addtolength{\hoffset}{-1.5cm}

\newtheorem{theorem}{Theorem}[section]
\newtheorem{lma}[theorem]{Lemma}
\newtheorem{prop}[theorem]{Proposition}

\newtheorem{fact}{Fact}

\theoremstyle{definition}
\newtheorem{defn}[theorem]{Definition}

\newtheorem{ex}[theorem]{Example}

 %definicao de cores
%\newrgbcolor{pink}{1 0.3 0.8}
%\newrgbcolor{LightGrayBlue}{0.65 0.65 0.85}
%\newrgbcolor{LightRedWine}{0.7 0.0 0.35}
%\newrgbcolor{SoftPink}{0.88 0.75 0.8}
%\newrgbcolor{Plum}{0.87 0.63 0.87}
%\newrgbcolor{GreenYellow}{0.68 0.88 0.2}
%\newrgbcolor{Khaki}{0.94 0.9 0.55}
%\newrgbcolor{PaleGreen}{0.9 0.95 0.8}
%\newrgbcolor{LightYellow}{1 1 0.75}
%\newrgbcolor{LightSkyBlue}{0.53 0.81 0.98}

% definicao de simbolos matematicos

\newcommand{\C}{\mathbb{C}}

\newcommand{\im}{\mathop{{\rm Im}}\nolimits}

\def\adj{\mathop{{\rm adj}}\nolimits}
\def\id{\mathord{{\rm id}}}
\def\imm{\mathop{{\rm imm}}\nolimits}

\def\miximm{\mathop{{\rm miximm}}\nolimits}

\def\tr{\mathop{{\rm tr}}\nolimits}

% \makeindex

\title{The $k$-th derivatives of the immanant and the $\chi$-symmetric power of an operator}

 \author{
S\'onia Carvalho\thanks{Centro de Estruturas Lineares e Combinat\'oria da Universidade de Lisboa, Av Prof Gama Pinto 2, P-1649-003 Lisboa and Departamento de Matem\'atica do ISEL, Rua Conselheiro Em\'\i dio Navarro 1, 1959-007 Lisbon, Portugal (scarvalho@adm.isel.pt).}
\and
Pedro J.\ Freitas\thanks{Centro de Estruturas Lineares e Combinat\'oria, Av Prof Gama Pinto 2, P-1649-003 Lisboa and Departamento de Matem\'atica da Faculdade de Ci\^encias, Campo Grande,
Edif\'\i cio C6, piso 2, P-1749-016 Lisboa. Universidade de Lisboa (pedro@ptmat.fc.ul.pt).}
 }
 \date{February, 2013}

\begin{document}

\maketitle

\begin{abstract}
In recent papers, R.\ Bhatia, T.\ Jain and P.\ Grover obtained formulas for directional derivatives, of all orders, of the determinant, the permanent, the $m$-th compound map and the $m$-th induced power map. In this paper we generalize these results for immanants and for other symmetric powers of a matrix. 
\end{abstract} 

\section{Introduction}

There is a formula for the derivative of the determinant map on the space of the square matrices of order $n$, known as the \textit{Jacobi formula}, which has been well known for a long time. In recent work, T.\ Jain and R.\ Bhatia derived formulas for higher order derivatives of the determinant (\cite{BJ}) and T.\ Jain also had derived formulas for all the orders of derivatives for the map $\wedge^m$ that takes an $n \times n$ matrix to its $m$-th compound (\cite{J}).
Later, P.\ Grover, in the same spirit of Jain's work, did the same for the permanent map and the for the map $\vee^m$ that takes an $n \times n$ matrix to its $m$-th induced power. The mentioned authors extended the theory in \cite{BGJ}.

This paper follows along the lines of this work. It is known that the determinant map and the permanent map are special cases of a more generalized map, which is the immanant, and the compound and the induced power of a matrix are also generalized by other symmetric powers, related to symmetric classes of tensors. These will be our objects of study.

\section{Immanant}
We will write $M_n(\mathbb{C})$ to represent the vector space of the square matrices of order $n$ with complex entries. Let $A \in M_n(\mathbb{C})$ and $\chi$ be an irreducible character of  $\mathbb{C}$. We define the immanant of $A$ as: 
$$d_{\chi}(A)=\sum_{\sigma \in S_n} \chi(\sigma)\displaystyle \prod_{i=1}^n a_{i\sigma(i)}.$$
In other words, $d_{\chi}:M_n(\mathbb{C} )\longrightarrow \mathbb{C}$ is a map taking an $n \times n$ matrix to its immanant. This is a differentiable map. For $X\in M_n(\C)$, we denote by $D d_{\chi}A(X)$ the directional derivative of $d_{\chi}$ at $A$ in the direction of $X$.

We denote by $A(i|j)$ the $n\times n$ square matrix that is obtained from $A$ by replacing the $i$-th row and $j$-th column with zero entries, except entry $(i,j)$ which we set to 1. We define the \textit{immanantal adjoint} $\adj_{\chi}(A)$ as the $n\times n$ matrix in which the entry $(i,j)$ is $d_{\chi}(A(i|j))$. This agrees with the definition of permanental adjoint in \cite[p.\ 237]{Me}, but not with the usual adjugate matrix (we would need to consider the transpose in that case). This is a matter of convention.

We obtain the following result similar to the \textit{Jacobi formula} for the determinant.

\begin{theorem}
For each $X \in M_n(\mathbb{C})$, $$D d_{\chi}(A)(X)= \tr (\adj_{\chi}(A)^T X).$$
\end{theorem}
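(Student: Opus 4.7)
The plan is a direct computation: expand the directional derivative by the product rule, then reshuffle the sum to recognize it as the trace expression.

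First, since $d_\chi$ is a polynomial map in the entries of $A$, I would compute
$$
D d_\chi(A)(X) = \frac{d}{dt}\bigg|_{t=0} d_\chi(A+tX) = \sum_{\sigma\in S_n}\chi(\sigma)\sum_{k=1}^n x_{k\sigma(k)}\prod_{i\neq k} a_{i\sigma(i)},
$$
by applying the Leibniz rule to each monomial $\prod_i (a_{i\sigma(i)}+tx_{i\sigma(i)})$ and setting $t=0$.

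Next I would rewrite the right-hand side of the target equation. By definition of the transpose and of the trace,
$$
\tr(\adj_\chi(A)^T X)=\sum_{k,j}[\adj_\chi(A)]_{kj}\,x_{kj}=\sum_{k,j} d_\chi(A(k|j))\,x_{kj}.
$$
Matching coefficients of $x_{kj}$ with the derivative expression, it suffices to establish the combinatorial identity
$$
d_\chi(A(k|j)) = \sum_{\substack{\sigma\in S_n\\ \sigma(k)=j}}\chi(\sigma)\prod_{i\neq k}a_{i\sigma(i)}.
$$

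For this identity I would unwind the definition of $A(k|j)$: row $k$ and column $j$ are zero except the $(k,j)$ entry, which equals $1$. In the immanant expansion $d_\chi(A(k|j))=\sum_{\sigma}\chi(\sigma)\prod_{i}[A(k|j)]_{i\sigma(i)}$, the factor coming from row $k$ forces $\sigma(k)=j$, and the factor coming from column $j$ (namely $[A(k|j)]_{\sigma^{-1}(j),j}$) gives the equivalent condition $\sigma^{-1}(j)=k$. Under this restriction, the $(k,j)$ factor contributes $1$ and the remaining factors are exactly $a_{i\sigma(i)}$ for $i\neq k$, yielding the claimed identity. Substituting this back proves the theorem.

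I do not expect a serious obstacle: the argument is essentially the Jacobi formula adapted from $\sum_\sigma\mathrm{sgn}(\sigma)$ to $\sum_\sigma\chi(\sigma)$, since the character $\chi$ plays no role in the differentiation or in selecting the permutations with $\sigma(k)=j$. The only point requiring care is the convention distinguishing $\adj_\chi(A)$ from its transpose, which is precisely why the statement involves $\adj_\chi(A)^T$ rather than $\adj_\chi(A)$ itself.
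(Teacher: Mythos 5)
Your proof is correct and follows essentially the same route as the paper: both compute $D d_\chi(A)(X)$ as the coefficient of $t$ in the polynomial $d_\chi(A+tX)$ and then match it against the entrywise expansion $\tr(\adj_\chi(A)^T X)=\sum_{k,j}d_\chi(A(k|j))x_{kj}$. The only cosmetic difference is that you carry out the differentiation at the level of the permutation sum via the Leibniz rule, whereas the paper invokes multilinearity in the columns and the column expansion $d_\chi(A)=\sum_i a_{ij}d_\chi(A(i|j))$; your identity $d_\chi(A(k|j))=\sum_{\sigma(k)=j}\chi(\sigma)\prod_{i\neq k}a_{i\sigma(i)}$ is exactly that fact, verified directly from the definition of $A(k|j)$.
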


\begin{proof}
For each $1 \leq j \leq n$, let $A(j;X)$ be the matrix obtained from $A$ by replacing the $j$-th column of $A$ by the $j$-th column of $X$ and keeping the rest of the columns unchanged. Then the given equality can be restated as 
\begin{equation}
D d_{\chi}(A)(X) = \sum_{j=1}^n d_{\chi}(A(j;X)).
\label{1}
\end{equation} 

On the other hand we note that $ D d_{\chi}(A)(X)$ is the coefficient of $t$ in the polynomial $d_{\chi}(A+tX)$. Using the fact that the immanant is a multilinear function of the columns we obtain the desired result.
\end{proof}
% Teremos que desenvolver isto um bocadinho no artigo. 

Again using the fact that the immanant is a multilinear function, we notice that for any $1 \leq i \leq n$, we have 
$$d_{\chi}(A)= \displaystyle \sum_{i=1}^n a_{ij}d_{\chi}(A(i|j)).$$
 
 Using this and (\ref{1}), we get that
 \begin{equation}
 D d_{\chi}(A)(X) = \sum_{i=1}^n \displaystyle \sum_{j=1}^n x_{ij} d_{\chi}(A(i|j)).
 \label{2}
 \end{equation}
 
 We will generalize the expressions (\ref{1}) and (\ref{2}) for the derivatives of all orders of the immanant.

We now turn to derivatives. Let  $V_1,\ldots ,V_n$ be $n$ vector spaces over $\C$, and let $\phi: V_1 \times\ldots \times V_n \longrightarrow \C$ be a multilinear form. For $A, X^1,\ldots X^k \in V_1,\ldots ,V_n$, the $k$-th derivative of $\phi$ at $A$ in the direction of $(X^1,\ldots ,X^k)$ is given by the expression
$$D^k\phi(A)(X^1,\ldots ,X^k):=\frac{\partial^k}{\partial t_1\ldots \partial t_k}\Big \vert_{t_1=\ldots =t_k=0}\phi(A+t_1X^1+\ldots +t_kX^k).$$

This is a multilinear function of $M^k_n(\mathbb{C})$. In particular, if we consider $\phi=d_{\chi}$ we have the definition of the $k$-th derivative of the immanant.

\section{First Expression}

We start by introducing some notation. Given a matrix  $A \in M_n(\mathbb{C})$, we will represent by $A_{[i]}$ the $i$-th column of $A$, $i=\lbrace 1,\ldots ,n \rbrace$. 

Let $k$ be a natural number, $1 \leq k \leq n$. Take $A, X^1,\ldots ,X^k \in M_n(\mathbb{C})$, and $t_1,\ldots ,t_k$ $k$ unknowns. Let $Q_{k,n}$ be the set of strictly increasing maps $\lbrace 1,\ldots ,k \rbrace\longrightarrow\lbrace 1,\ldots ,n \rbrace $ and $G_{k,n}$ the set of increasing maps.

We will denote by $A(\alpha ;X^1,\ldots ,X^k)$ the matrix of order $n$ obtained from $A$ replacing the $\alpha(j)$ column of $A$ by the $\alpha(j)$ column of $X^j$.

\begin{theorem}[First expression]\label{formula1}
For every $1 \leq k \leq n,$ 
$$D^k d_{\chi}(A)(X^1,\ldots ,X^k)= \displaystyle \sum_{\sigma \in S_k} \displaystyle \sum_{\alpha \in Q_{k,n}}d_{\chi}A(\alpha ;X^{\sigma(1)},\ldots ,X^{\sigma(k)}).$$ 
In particular,
$$D^k d_{\chi}(A)(X,\ldots ,X)= k!\displaystyle \sum_{\alpha \in Q_{k,n}}d_{\chi}A(\alpha ;X,\ldots ,X).$$
\end{theorem}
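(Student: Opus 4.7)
The plan is to mimic the argument used for the first-order Jacobi-type identity proved earlier, extracting the $k$-th mixed derivative as the coefficient of a specific monomial in a polynomial obtained by multilinear expansion. I would begin by noting that
$$D^k d_\chi(A)(X^1,\ldots,X^k)$$
is exactly the coefficient of $t_1 t_2 \cdots t_k$ (each to the first power) in the polynomial
$$P(t_1,\ldots,t_k) := d_\chi(A + t_1 X^1 + \cdots + t_k X^k).$$
This is just the definition of the mixed partial derivative applied to a polynomial in the $t_i$'s.

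Next I would use the fact that $d_\chi$ is a multilinear function of the $n$ columns: the $j$-th column of $A + \sum_{i=1}^k t_i X^i$ equals $A_{[j]} + \sum_{i=1}^k t_i X^i_{[j]}$. Expanding $P$ by multilinearity produces a sum indexed by all ways of choosing, for each column position $j \in \{1,\ldots,n\}$, one summand among $A_{[j]}, t_1 X^1_{[j]}, \ldots, t_k X^k_{[j]}$; each choice contributes the immanant of the resulting matrix, multiplied by the product of the chosen $t_i$'s.

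To isolate the coefficient of $t_1 \cdots t_k$, one keeps only those choices in which $X$-columns are picked for exactly $k$ positions and each index $i \in \{1,\ldots,k\}$ appears precisely once. The set of selected positions, listed in increasing order, gives an element $\alpha \in Q_{k,n}$, and the assignment of the $X^i$'s to the positions $\alpha(1) < \cdots < \alpha(k)$ is a permutation $\sigma \in S_k$, under the convention that position $\alpha(j)$ receives the $\alpha(j)$-th column of $X^{\sigma(j)}$. By the definition of $A(\alpha;\cdot)$ recalled just before the theorem, the resulting summand is exactly $d_\chi A(\alpha; X^{\sigma(1)},\ldots,X^{\sigma(k)})$, which yields the claimed formula after summing over $(\alpha,\sigma) \in Q_{k,n} \times S_k$.

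The specialization $X^1 = \cdots = X^k = X$ is then immediate: the matrix $A(\alpha; X^{\sigma(1)},\ldots,X^{\sigma(k)})$ no longer depends on $\sigma$, so the inner sum over $S_k$ produces an overall factor of $k!$. The only real content of the argument is the combinatorial bookkeeping in the multilinear expansion; this is the step I would write most carefully, as no analytic difficulty arises once the derivative has been identified with a polynomial coefficient.
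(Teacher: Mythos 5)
Your proposal is correct and follows exactly the paper's own argument: identify $D^k d_\chi(A)(X^1,\ldots,X^k)$ with the coefficient of $t_1\cdots t_k$ in $d_\chi(A+t_1X^1+\cdots+t_kX^k)$ and expand by multilinearity in the columns, with the sum over $Q_{k,n}\times S_k$ recording which columns are replaced and by which $X^i$. You merely spell out the combinatorial bookkeeping that the paper leaves implicit.
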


\begin{proof}
Just like in the case of the first derivative, we have that $D^k d_{\chi}(A)(X^1,\ldots ,X^k)$ is the coefficient of $t_1\ldots t_k$ in the expansion of the polynomial $d_{\chi}(A+t_1X^1+\ldots +t_kX^k)$. Using the linearity of the immanant function in each column, we obtain the desired equality.
\end{proof}

\begin{proof}
One simply has to observe that each summand in $\Delta_\chi(A;X^1,\ldots,X^k)$ appears $(n-k)!$ times: once we fix a permutation of the matrices $X^1,\ldots,X^k$, these summands correspond to the possible permutations of the $n-k$ matrices equal to $A$.
\end{proof}

%If $k=n$, then we have that $Q_{n,n}=\lbrace (1,2,\ldots ,n) \rbrace$ and in this particular case,
%\begin{eqnarray*}
%\Delta_\chi(X^1,\ldots ,X^n) & = & \frac{1}{n!}\sum_{\sigma \in S_n}d_{\chi} A((1,\ldots,n);X_{[1]}^{\sigma(1)},\ldots ,X_{[n]}^{\sigma(n)})\\
%& = & \frac{1}{n!}\sum_{\sigma \in S_n}d_{\chi} (X_{[1]}^{\sigma(1)},\ldots ,X_{[n]}^{\sigma(n)})\\
%& = & \Delta_{\chi} (X^{\sigma(1)},\ldots,X^{\sigma(n)})
%\end{eqnarray*}
%where $X_{[j]}^i$ represents the $j$-th column of $X^i$, with $i,j=1,\ldots,n$. Note that if $k=n$ the formula does not depend of the matrix $A$. moreover, if $k=n$ and if $\chi=\varepsilon$ this definition coincides with that of the mixed discriminant of the matrices $X^1,\ldots ,X^n$, and if $\chi=1$ it coincides with the permanental discriminant of the matrices $X^1,\ldots ,X^n$.

As an immediate consequence of this result, we can obtain another formula for the derivative of order $k$ of the immanant map. This generalizes formula (26) in \cite{BJ}.

\begin{prop}[First expression, rewritten]
\begin{eqnarray}
\label{eq_disc}
D^kd_{\chi} (A)(X^1,\ldots ,X^k) & = & \frac{n!}{(n-k)!} \Delta_{\chi}(A;X^1,\ldots ,X^k).
\end{eqnarray}
\end{prop}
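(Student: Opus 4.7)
My plan is to deduce the identity directly from Theorem~\ref{formula1} by the counting argument displayed just above the statement. By Theorem~\ref{formula1},
\[
D^k d_\chi(A)(X^1,\ldots,X^k)=\sum_{\sigma\in S_k}\sum_{\alpha\in Q_{k,n}}d_\chi\bigl(A(\alpha;X^{\sigma(1)},\ldots,X^{\sigma(k)})\bigr),
\]
which is a sum of $k!\binom{n}{k}=n!/(n-k)!$ terms, so the whole content of the proposition is to match this double sum with $\frac{n!}{(n-k)!}\,\Delta_\chi(A;X^1,\ldots,X^k)$.

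The first step is to read off the definition of $\Delta_\chi(A;X^1,\ldots,X^k)$ as a sum of $d_\chi$-values attached to the natural placements of $X^1,\ldots,X^k$ together with $n-k$ copies of $A$ in the $n$ column slots of the output matrix. The second step is to consider the corresponding ``fully ordered'' sum in which we regard all $n$ labels as distinguishable: since the $n-k$ copies of $A$ are in fact indistinguishable, any permutation among their slots leaves the resulting matrix, hence its $\chi$-immanant, unchanged. So every distinct summand of $\Delta_\chi$ arises exactly $(n-k)!$ times in the ordered sum, which is the content of the short proof preceding the proposition.

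The third step is to set up the bijection between the ordered arrangements and the index set $S_k\times Q_{k,n}$ used in Theorem~\ref{formula1}: $\alpha\in Q_{k,n}$ selects which $k$ of the $n$ columns receive a perturbation, while $\sigma\in S_k$ records the assignment of the labels $X^1,\ldots,X^k$ to those chosen slots. Combining the bijection with the $(n-k)!$-overcount and the prefactor $n!/(n-k)!$ then identifies the two expressions and proves the proposition. The only real obstacle is notational: one must check that the implicit definition of $\Delta_\chi$ matches the ``one canonical representative per arrangement'' description above, so that the multiplicity comes out exactly to $(n-k)!$ and no additional symmetry factor from repetitions among the $X^j$'s creeps in.
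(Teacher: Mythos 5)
Your argument is correct and is essentially the paper's own proof: the paper likewise observes that, once a permutation of $X^1,\ldots,X^k$ is fixed, each summand of $\Delta_\chi(A;X^1,\ldots,X^k)$ is repeated $(n-k)!$ times over the permutations of the $n-k$ copies of $A$, and then matches the remaining index set with $S_k\times Q_{k,n}$ from Theorem~\ref{formula1}. Your version merely makes the bijection and the $n!/(n-k)!$ term count explicit, which is a reasonable elaboration rather than a different route.
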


\section{Laplace Expansion for Immanants}
 
We start by generalizing the Laplace expansion, known for the determinant and the permanent, to all immanants. This expansion was proved first for the determinant and the same arguments used can be used to prove the correspondent expansion for the permanent. These can be found in \cite{MaM} and in \cite{Mi}. 

The similarity of proofs is due to the fact that the determinant and the permanent of a direct sum of matrices is just the product of the determinants, or the permanents, of the summand blocks. However, if $\chi$ is any other irreducible character, there is no clear general relation between the immanant of $A$ and the immanant of any submatrix of $A$. So the Laplace expansion formula for any immanant is a little more complicated.\medskip
 
% $$d_\chi(A)=\displaystyle \sum_{\sigma \in S_n}\chi(\sigma)\displaystyle\prod_{t=1}^n a_{t\sigma(t)}$$

 Let $\alpha \in Q_{k,n}$, and 
 $$S_{k_\alpha}=\lbrace \sigma \in S_n : \sigma (\lbrace 1,\ldots ,k \rbrace)=\im \alpha \rbrace.$$
 \begin{fact} If $\alpha \neq \beta$ then $S_{k_\alpha} \cap S_{k_\beta}=\emptyset.$
 \end{fact}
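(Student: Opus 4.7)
The plan is to argue by contrapositive: I will show that if $S_{k_\alpha}\cap S_{k_\beta}$ is nonempty, then $\alpha=\beta$. Suppose $\sigma$ lies in the intersection. By the definition of $S_{k_\alpha}$ and $S_{k_\beta}$ we have
$$\im\alpha = \sigma(\{1,\ldots,k\}) = \im\beta.$$
So the two strictly increasing maps $\alpha,\beta\in Q_{k,n}$ have the same image in $\{1,\ldots,n\}$.

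The remaining step is to observe that a strictly increasing map from $\{1,\ldots,k\}$ to $\{1,\ldots,n\}$ is completely determined by its image: if $\im\alpha=\{i_1<i_2<\cdots<i_k\}$, then necessarily $\alpha(j)=i_j$ for each $j$, because $\alpha$ is a bijection onto its image that preserves order. The same holds for $\beta$, so $\alpha(j)=i_j=\beta(j)$ for every $j\in\{1,\ldots,k\}$, i.e.\ $\alpha=\beta$. This contradicts the hypothesis $\alpha\neq\beta$, concluding the proof.

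There is no real obstacle here; the statement is essentially a reformulation of the fact that $Q_{k,n}$ is in bijection with the $k$-subsets of $\{1,\ldots,n\}$ via $\alpha\mapsto\im\alpha$. I would present the argument in two or three lines.
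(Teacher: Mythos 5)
Your proof is correct and follows exactly the same route as the paper's: take $\sigma$ in the intersection, deduce $\im\alpha=\sigma(\{1,\ldots,k\})=\im\beta$, and conclude $\alpha=\beta$ because a strictly increasing map is determined by its image. You merely spell out this last step more explicitly than the paper does, which is fine.
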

 Suppose $\sigma \in S_{k_\alpha} \cap S_{k_\beta}$. Then  $\sigma (\lbrace 1,\ldots ,k \rbrace)=\im \alpha=\im \beta$, and thus $\im \alpha=\im \beta$. Since $\alpha, \beta \in Q_{k,n}$, it follows that $\alpha= \beta$.
 
 \begin{fact}$$S_n =\displaystyle \bigcup_{\alpha \in Q_{k,n}}^{\centerdot}S_{k_\alpha}$$
 \end{fact}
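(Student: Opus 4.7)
The plan is to prove the stated equality of sets by a double inclusion, leveraging the disjointness already established in Fact 1 for the dotted-union symbol.

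For the inclusion $\bigcup_{\alpha \in Q_{k,n}} S_{k_\alpha} \subseteq S_n$, I would just observe that by definition $S_{k_\alpha}$ is a subset of $S_n$, so the union is as well. Nothing to do here.

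The substantive direction is $S_n \subseteq \bigcup_{\alpha \in Q_{k,n}} S_{k_\alpha}$. Given an arbitrary $\sigma \in S_n$, I would consider the set $\sigma(\{1,\ldots,k\})$, which is a $k$-element subset of $\{1,\ldots,n\}$. The key observation is the elementary bijection between $k$-subsets of $\{1,\ldots,n\}$ and strictly increasing maps $\{1,\ldots,k\}\to\{1,\ldots,n\}$: any $k$-subset can be enumerated in a unique increasing order, defining a unique $\alpha \in Q_{k,n}$ whose image is that subset. Applied to the subset $\sigma(\{1,\ldots,k\})$, this yields some $\alpha \in Q_{k,n}$ with $\im\alpha = \sigma(\{1,\ldots,k\})$, so $\sigma \in S_{k_\alpha}$, as required.

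Finally, the dot on the union indicates that the union is disjoint, and that assertion is precisely the content of Fact 1, already proved immediately above the statement. So I would just cite Fact 1 to conclude that the union is in fact disjoint.

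There is no real obstacle here; the argument is essentially a bookkeeping remark, and the only substantive fact used is that $Q_{k,n}$ is in bijection with the collection of $k$-subsets of $\{1,\ldots,n\}$ via taking images.
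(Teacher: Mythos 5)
Your proof is correct and follows the same route as the paper: enumerate the $k$-set $\sigma(\{1,\ldots,k\})$ in increasing order to obtain the unique $\alpha\in Q_{k,n}$ with $\im\alpha=\sigma(\{1,\ldots,k\})$, note the reverse inclusion is trivial, and invoke the previously established disjointness. No differences worth noting.
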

 
 If $\pi \in S_n$ then $\pi( \lbrace 1,\ldots ,k \rbrace)= \lbrace j_1,\ldots ,j_k \rbrace.$ Suppose that $j_1 <\ldots <j_k.$
 
 Let $\gamma \in S_n$ such that $\gamma(i)=j_i$. Therefore $\pi \in S_{k_\gamma}$ and $S_n \subseteq \displaystyle \bigcup_{\alpha \in Q_{k,n}} S_{k_\alpha}$.
 
 The other inclusion is trivial.
 
 \begin{fact} $|\lbrace S_{k_\alpha}: \alpha \in Q_{k,n}\rbrace| = | Q_{k,n}|= \dfrac{n!}{k!(n-k)!}$.
 \end{fact}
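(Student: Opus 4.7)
The plan is to verify the two equalities separately, both being essentially bookkeeping built on the previous two facts.

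For the first equality $|\{S_{k_\alpha}:\alpha\in Q_{k,n}\}|=|Q_{k,n}|$, I would exhibit the assignment $\alpha\mapsto S_{k_\alpha}$ as a bijection from $Q_{k,n}$ onto the collection $\{S_{k_\alpha}:\alpha\in Q_{k,n}\}$. Surjectivity is by definition, so only injectivity needs argument. For this I would first note that each $S_{k_\alpha}$ is nonempty: given $\alpha\in Q_{k,n}$, any permutation $\sigma\in S_n$ whose restriction to $\{1,\ldots,k\}$ has image $\im\alpha$ belongs to $S_{k_\alpha}$, and such a $\sigma$ clearly exists. Combined with Fact~1, distinct $\alpha,\beta\in Q_{k,n}$ yield disjoint nonempty sets $S_{k_\alpha},S_{k_\beta}$, hence $S_{k_\alpha}\neq S_{k_\beta}$. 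So $\alpha\mapsto S_{k_\alpha}$ is injective.

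For the second equality $|Q_{k,n}|=\tfrac{n!}{k!(n-k)!}$, I would exhibit a bijection between $Q_{k,n}$ and the set of $k$-element subsets of $\{1,\ldots,n\}$. The map $\alpha\mapsto\im\alpha$ is the natural candidate: it is clearly well-defined (since a strictly increasing $\alpha$ is injective, $|\im\alpha|=k$), and it is a bijection because any $k$-element subset $\{j_1<\cdots<j_k\}\subseteq\{1,\ldots,n\}$ arises from the unique strictly increasing map $\alpha(i)=j_i$. The number of $k$-subsets of an $n$-set is $\binom{n}{k}=\tfrac{n!}{k!(n-k)!}$, which completes the count.

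I do not expect any real obstacle here; both parts are elementary combinatorics. The only point that requires any care is remembering to check nonemptiness of $S_{k_\alpha}$ before invoking Fact~1 to conclude injectivity of $\alpha\mapsto S_{k_\alpha}$, since Fact~1 on its own only gives disjointness.
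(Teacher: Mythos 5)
Your proof is correct; the paper states this fact without any proof at all (it is treated as immediate), and your argument is exactly the standard one that fills in the omitted details. The one point of genuine care --- that Fact~1 alone gives only disjointness, so nonemptiness of each $S_{k_\alpha}$ is needed to conclude $S_{k_\alpha}\neq S_{k_\beta}$ --- is correctly identified and handled.
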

 
  \begin{fact} If $\sigma \in S_{k_\alpha}$ then $\sigma (\lbrace k+1,\ldots ,n \rbrace)=\overline{\im \alpha}$ (the complement of $\im \alpha$).
 \end{fact}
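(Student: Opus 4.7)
The plan is to exploit the fact that $\sigma$ is a bijection on $\{1,\ldots,n\}$, so it sends complementary subsets to complementary subsets. First I would observe that $\{1,\ldots,n\}$ partitions as the disjoint union $\{1,\ldots,k\} \,\dot\cup\, \{k+1,\ldots,n\}$. Applying the bijection $\sigma$ preserves this disjointness and its union, giving
\[
\{1,\ldots,n\} \;=\; \sigma(\{1,\ldots,k\}) \,\dot\cup\, \sigma(\{k+1,\ldots,n\}).
\]

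Next I would invoke the defining property of $S_{k_\alpha}$, namely $\sigma(\{1,\ldots,k\}) = \im \alpha$. Substituting this into the display above yields
\[
\{1,\ldots,n\} \;=\; \im\alpha \,\dot\cup\, \sigma(\{k+1,\ldots,n\}),
\]
so that $\sigma(\{k+1,\ldots,n\}) = \{1,\ldots,n\} \setminus \im\alpha = \overline{\im\alpha}$, which is exactly the claim.

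There is no real obstacle here; the statement is essentially a bookkeeping consequence of the injectivity of $\sigma$ together with the definition of $S_{k_\alpha}$. The proof should take only a couple of lines.
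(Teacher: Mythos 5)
Your proof is correct and rests on the same ingredients as the paper's: the bijectivity of $\sigma$ and the defining condition $\sigma(\{1,\ldots,k\})=\im\alpha$. The only cosmetic difference is that you argue directly via the image of the partition $\{1,\ldots,k\}\,\dot\cup\,\{k+1,\ldots,n\}$, whereas the paper derives the inclusion $\sigma(\{k+1,\ldots,n\})\subseteq\overline{\im\alpha}$ by contradiction with injectivity; both are the same two-line bookkeeping argument.
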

 
 Suppose that $l \in  \lbrace k+1,\ldots ,n \rbrace$ and $\sigma(l)=j_l \in \im \alpha$. We have $\sigma \in S_{k_\alpha}$ so we can find $i \in \lbrace j_1,\ldots ,j_k \rbrace$ such that $\sigma(i)=j_l$. But $i \neq l$. This is a contradiction, because  $\sigma \in S_n$, and so $\sigma$ is injective.\medskip

 Using the previous facts, we can conclude that for every  $\sigma \in S_{k_\beta}$, the value
 $$\displaystyle \sum_{\sigma \in S_{k_\beta} }\chi(\sigma)\displaystyle\prod_{t=1}^n a_{t\sigma(t)}$$ 
 does not depend on the values of the following entries of the matrix $A$:
 \begin{enumerate}
 \item[I.] The first $k$ rows of  $A$ and the $n-k$ columns of $A$ in $\overline{\im \beta}$.
 \item[II.] Rows $k+1,\ldots ,n$ of $A$ and the $k$ columns of $A$ with index in $\im \beta$.
 \end{enumerate}
 
We now denote by $A\{\id | \beta\}=(a^*_{ij})$ the matrix of order $n$ obtained by replacing every entry in I and II by zeros. We then have
 $$\displaystyle \sum_{\sigma \in S_{k_\beta} }\chi(\sigma)\displaystyle\prod_{t=1}^n a_{t\sigma(t)}=d_\chi (A\{\id|\beta\}).$$
 On the other hand,
 \begin{eqnarray*}
 d_\chi(A\{\id | \beta\})&=&\displaystyle \sum_{\sigma \in S_n }\chi(\sigma)\displaystyle\prod_{t=1}^n a^*_{t\sigma(t)} \\
 &=&\displaystyle \sum_{\sigma \in \bigcup_{\alpha \in Q_{k,n}}S_{k_\alpha}  }\chi(\sigma)\displaystyle\prod_{t=1}^n a^*_{t\sigma(t)}.
% &=&\displaystyle \sum_{\sigma \in S_{k_{\alpha_1}} }\chi(\sigma)\displaystyle\prod_{t=1}^n a^*_{t\sigma(t)}+\ldots +\displaystyle \sum_{\sigma \in S_{k_{\alpha_{
%\binom{n}{k}}}} }\chi(\sigma)\displaystyle\prod_{t=1}^n a^*_{t\sigma(t)}.
\end{eqnarray*}
  
Now take $\alpha\neq \beta$ and $\sigma \in S_{k_{\alpha}}$. Then $\displaystyle\prod_{t=1}^n a^*_{t\sigma(t)}=0$, because at least one of the factors is zero. So 
$$\sum_{\sigma \in S_{k_{\alpha}} }\chi(\sigma)\displaystyle\prod_{t=1}^n a^*_{t\sigma(t)}=0,$$ 
for every  $\alpha\in Q_{k,n}\setminus\{\beta\}$.
 
Moreover, for  $A\{\id | \beta\}$, if $\sigma \in S_{k_\beta}$ then $a^*_{t\sigma(t)}=a_{t\sigma(t)}$.  So $d_\chi(A\{\id |\beta\})=\sum_{\sigma \in S_{k_{\beta}}}\chi(\sigma)\displaystyle\prod_{t=1}^n a_{t\sigma(t)}$. Finally,
\begin{eqnarray}
d_\chi(A)&=& \sum_{\sigma \in S_n}\chi(\sigma)\prod_{t=1}^n a_{t\sigma(t)}\nonumber \\
&=&\displaystyle \sum_{\alpha\in Q_{k,n}} \displaystyle \sum_{\sigma \in S_{k_\alpha}}\chi(\sigma)\displaystyle\prod_{t=1}^n a_{t\sigma(t)}\nonumber \\
&=& \displaystyle \sum_{\alpha\in Q_{k,n}} d_\chi(A\{\id |\alpha\}).\label{1formula}
\end{eqnarray}

\begin{defn}
Let $X$ be a square matrix of order  $n$, $k$ a natural number, $1 \leq k \leq n$ e $\alpha, \beta \in Q_{k,n}$. We denote $X[\alpha|\beta]$ as the square matrix of order $k$ obtained from $X$ by picking the rows $\alpha(1),\ldots ,\alpha(k)$ and the columns $\beta(1),\ldots ,\beta(k)$.

We denote $X(\alpha|\beta)$ as the square matrix of order $n-k$ obtained from $X$ by deleting the rows $\alpha(1),\ldots ,\alpha(k)$ and the columns $\beta(1),\ldots ,\beta(k)$.
\end{defn}

Let $\alpha, \beta \in Q_{k,n}$, and take $A$ a square matrix of order $k$ and $B$ a square matrix of order  $n-k$. Denote by $\bar \alpha$ be the unique element of $Q_{n-k,n}$ with $\im \overline \alpha = \overline{\im \alpha}$. 

We now define
$$A\bigoplus_{\alpha | \beta}B=(x_{ij}),$$
as a square matrix of order $n$ such that
\begin{itemize}
\item $x_{ij}=0$ if $i \in \im \alpha$ and $j \not\in \im \beta$;
\item $x_{ij}=0$ if   $i \not \in \im \alpha$  and $j \in \im \beta$;
\item $x_{ij}=a_{\alpha^{-1}(i)\beta^{-1}(j)}$ if $i \in \im \alpha$ and $j \in \im \beta$;
\item $x_{ij}=b_{\overline{\alpha}^{-1}(i)\overline{\beta}^{-1}(j)}$ if $i \not \in \im \alpha$ and $j \not \in \im \beta$.
\end{itemize}

In a sense, we pick rows $\alpha$ and $\beta$ of $A$ and rows $\bar \alpha$ and $\bar \beta$ of $B$. If $\alpha=\beta=(1,\ldots,k)$, this is the usual direct sum of $A[1,\ldots,k|1,\ldots,k]$ and $B(1,\ldots,k|1,\ldots,k)$.\medskip

So, according to our definitions, it is easy to verify that 
$$X\{\id |\beta\}=X[\id|\beta]\bigoplus_{\id | \beta}X(\id|\beta).$$

Applying the previous equality  and  \eqref{1formula} it follows that
$$d_\chi(X)=\displaystyle \sum_{\beta \in Q_{k,n}} d_\chi(X[\id|\beta]\bigoplus_{\id | \beta}X(\id|\beta))
= \displaystyle \sum_{\beta \in Q_{k,n}}d_\chi(X\{\id|\beta\}).$$

For a fixed $\alpha \in Q_{k,n}$, using similar arguments, we get {\em the Laplace expansion for immanants}: 
\begin{equation}
d_\chi(X) = \displaystyle \sum_{\beta \in Q_{k,n}}d_\chi(X[\alpha|\beta]\bigoplus_{\alpha | \beta}X(\alpha|\beta)) = \displaystyle \sum_{\beta \in Q_{k,n}}d_\chi(X\{\alpha|\beta\}).\label{Laplace_immanants}
\end{equation}

\begin{ex}
Let $A$ be a matrix of order $4$. Let $k=2$ and $\alpha=\id$. Then, we have
\begin{eqnarray*}
 d_{\chi}(A)& = & d_{\chi}\begin{pmatrix}
a_{11}& a_{12} & 0 & 0 \\
a_{12}& a_{22} & 0 & 0 \\
0 & 0 & a_{33}& a_{34} \\
0 & 0 & a_{43}& a_{44}
\end{pmatrix} + d_{\chi}\begin{pmatrix}
a_{11}& 0 & a_{13} & 0 \\
a_{12}& 0 & a_{23} & 0 \\
0 & a_{32}& 0 & a_{34} \\
0 & a_{42}& 0 & a_{44}
\end{pmatrix}\\
& + & d_{\chi}\begin{pmatrix}
a_{11}& 0 & 0 & a_{14}\\
a_{12}& 0 & 0 & a_{24}\\
0 & a_{32} & a_{33}& 0 \\
0 & a_{42}& a_{43} & 0
\end{pmatrix}
+ d_{\chi}\begin{pmatrix}
0 & a_{12} & a_{13} & 0 \\
0 & a_{22} & a_{23} & 0 \\
a_{31} & 0 & 0 & a_{34} \\
a_{41} & 0 & 0 & a_{44}
\end{pmatrix}\\
& + & d_{\chi}\begin{pmatrix}
0 & a_{12} & 0 & a_{14} \\
0 & a_{22} & 0 & a_{24} \\
a_{31} & 0& a_{33}& 0 \\
a_{41}&0& a_{43}& 0
\end{pmatrix} + d_{\chi}\begin{pmatrix}
0 & 0 & a_{13} & a_{14}\\
0 & 0 & a_{23} & a_{24}\\
a_{31} & a_{32} & 0 & 0 \\
a_{41} & a_{42}& 0 & 0
\end{pmatrix}
\end{eqnarray*} 
\end{ex}

We list some properties of the matrix $X\{\alpha|\beta\}$. 

\begin{prop} Let $\alpha, \beta, \alpha', \beta'\in Q_{k,n}$. Then we have 
$$X\{\alpha|\beta\}[\alpha|\beta] = X[\alpha|\beta] \text{ and } X\{\alpha|\beta\}(\alpha|\beta) = X(\alpha|\beta).$$ 
If $\beta\neq \beta'$, then both matrices $X\{\alpha|\beta\}[\alpha|\beta']$ and $X\{\alpha|\beta\}(\alpha|\beta')$ have a zero column.  If $\alpha\neq \alpha'$, then both matrices $X\{\alpha|\beta\}[\alpha'|\beta]$ and $X\{\alpha|\beta\}(\alpha'|\beta)$ have a zero row. 
\label{alfa-beta-soma}
\end{prop}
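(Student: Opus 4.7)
The plan is to reason entirely from the entrywise definition of $X\{\alpha|\beta\} = X[\alpha|\beta]\bigoplus_{\alpha|\beta}X(\alpha|\beta)$. By the bulleted definition of $\bigoplus_{\alpha|\beta}$, the $(i,j)$ entry of $X\{\alpha|\beta\}$ equals $x_{ij}$ whenever $i\in\im\alpha$ and $j\in\im\beta$, or whenever $i\notin\im\alpha$ and $j\notin\im\beta$; otherwise the entry is $0$.

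For the two equalities, I would simply note that the submatrix $X\{\alpha|\beta\}[\alpha|\beta]$ is obtained by keeping only those $(i,j)$ with $i\in\im\alpha$ and $j\in\im\beta$, so every retained entry lies in the ``preserved'' block and equals $x_{ij}$; this is exactly $X[\alpha|\beta]$. Symmetrically, $X\{\alpha|\beta\}(\alpha|\beta)$ is obtained by keeping only those $(i,j)$ with $i\notin\im\alpha$ and $j\notin\im\beta$, which is exactly $X(\alpha|\beta)$.

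For the zero-column claim when $\beta\neq\beta'$, since $\beta,\beta'\in Q_{k,n}$ are distinct strictly increasing maps with images of the same size, we have $\im\beta\neq\im\beta'$, so we can pick $j_0\in\im\beta'\setminus\im\beta$ and $j_1\in\im\beta\setminus\im\beta'$. For $X\{\alpha|\beta\}[\alpha|\beta']$: the column corresponding to $j_0$ is retained (since $j_0\in\im\beta'$), and all its retained entries sit in rows $i\in\im\alpha$; but $i\in\im\alpha$ together with $j_0\notin\im\beta$ forces the entry of $X\{\alpha|\beta\}$ to vanish, so this is a zero column. For $X\{\alpha|\beta\}(\alpha|\beta')$: the column corresponding to $j_1$ is retained (since $j_1\notin\im\beta'$), and all its retained entries sit in rows $i\notin\im\alpha$; but $i\notin\im\alpha$ together with $j_1\in\im\beta$ again forces the entry to vanish, giving a zero column.

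The row statement when $\alpha\neq\alpha'$ follows by the identical argument with the roles of rows and columns interchanged (or, equivalently, by applying the column case to the transpose and using that $(X\{\alpha|\beta\})^T = X^T\{\beta|\alpha\}$). There is no real obstacle here; the only thing to watch is the index bookkeeping, specifically that both directions of the symmetric difference of $\im\beta$ and $\im\beta'$ are used --- one to handle the $[\,\cdot\,]$ submatrix and the other to handle the $(\,\cdot\,)$ submatrix.
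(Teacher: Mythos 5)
Your proof is correct and follows the same route as the paper, which simply states that the claims ``are consequences of the definitions'' and omits the details you supply. Your entrywise case analysis, including the observation that both directions of the symmetric difference $\im\beta \,\triangle\, \im\beta'$ are needed (one for the $[\,\cdot\,]$ submatrix and one for the $(\,\cdot\,)$ submatrix), is exactly the verification the paper leaves to the reader.
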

\begin{proof}
These are consequences of the definitions. % desenvolver um pouco
\end{proof}

We can now check that this formula generalises the known Laplace formulas for the determinant and the permanent (see \cite{MaM} and \cite{Mi}).
If $\chi= \varepsilon$ then $d_\varepsilon=\det$. For $\alpha\in Q_{k,n}$, denote $|\alpha|=\alpha(1)+\ldots+\alpha(k)$.  Fixing $\alpha\in Q_{k,n}$ and applying the previous properties we have:
\begin{eqnarray}
\det X & = & \sum_{\beta \in Q_{k,n}}\det(X\{\alpha |\beta\})\nonumber \\
&=& (-1)^{|\alpha|}\sum_{\beta \in Q_{k,n}}\sum_{\gamma \in Q_{k,n}}(-1)^{|\gamma|}\det(X\{\alpha |\beta\}[\alpha|\gamma])\det(X\{\alpha |\beta\}(\alpha|\gamma))\nonumber \\
&=& (-1)^{|\alpha|}\sum_{\beta \in Q_{k,n}}(-1)^{|\beta|}\det(X\{\alpha |\beta\}[\alpha|\beta])\det(X\{\alpha |\beta\}(\alpha|\beta))\nonumber \\
&=& (-1)^{|\alpha|}\sum_{\beta \in Q_{k,n}}(-1)^{|\beta|}\det(X[\alpha|\beta])\det(X(\alpha|\beta)).\label{gen-det-laplace}
\end{eqnarray}

This is exactly the expression of the the Laplace expansion for determinants. With similar arguments we can prove the same result for the Laplace expansion of the permanent.

\section{Second Expression}

We now wish to separate the entries of the matrices $X^1,\ldots,X^k$ from the entries of $A$. It is easy to express the determinant of a direct sum in terms of determinants of direct summands, and the same happens with the permanent. With immanants, the best one can do is use formula \eqref{Laplace_immanants}, which is what we do in this second expression. 

Take $X^1,\ldots ,X^k$ complex matrices of order $n$, and, for $\sigma\in S_k$, $ \beta \in Q_{k,n}$. Denoting by $\mathbf{0}$ the zero matrix of order $n$, we define
$$X^\sigma_\beta = \mathbf{0}(\beta; X^{\sigma(1)},\ldots, X^{\sigma(k)}),$$
the matrix whose $\beta(p)$-th column is equal to $X^{\sigma(p)}_{[\beta (p) ]}$ and the remaining columns are zero, for $1 \leq p \leq k$.

\begin{theorem}[Second Formula]
\label{formula2}
$$D^k d_\chi A(X^1,\ldots ,X^k)= \sum_{\sigma \in S_k}\sum _{\alpha, \beta \in Q_{k,n}}d_\chi (X^\sigma_{\beta}[ \alpha | \beta ] \bigoplus_{\alpha | \beta}A(\alpha|\beta)),$$ 
in particular
$$D^k d_\chi A(X,\ldots ,X)= k!\sum _{\alpha, \beta \in Q_{k,n}}d_\chi (X [ \alpha | \beta ] \bigoplus_{\alpha |\beta}A(\alpha|\beta))$$
\end{theorem}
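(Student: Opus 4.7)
The plan is to reduce to the First Expression and then apply the Laplace expansion for immanants to each summand. Starting from Theorem~\ref{formula1}, I would write
\[
D^k d_\chi(A)(X^1,\ldots,X^k) = \sum_{\sigma\in S_k}\sum_{\gamma\in Q_{k,n}} d_\chi(A(\gamma;X^{\sigma(1)},\ldots,X^{\sigma(k)})),
\]
and, for each fixed $\sigma\in S_k$ and $\gamma\in Q_{k,n}$, abbreviate $M := A(\gamma;X^{\sigma(1)},\ldots,X^{\sigma(k)})$. By construction, $M$ agrees with $A$ off the columns indexed by $\im\gamma$, and on those columns it carries, in column $\gamma(p)$, the $\gamma(p)$-th column of $X^{\sigma(p)}$.

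To separate the entries of the $X^{\sigma(p)}$ from those of $A$, I would apply the Laplace expansion \eqref{Laplace_immanants} to $M$ with the column index held fixed equal to $\gamma$ and summed over the row index $\alpha\in Q_{k,n}$. This column-analogue of \eqref{Laplace_immanants} is established either by rerunning the argument preceding \eqref{1formula} with $S_n$ partitioned according to $\sigma^{-1}(\im\gamma)$ instead of $\sigma(\{1,\ldots,k\})$, or by applying \eqref{Laplace_immanants} to $M^T$ after observing that $d_\chi(B^T)=d_\chi(B)$, which holds because the irreducible characters of $S_n$ are real valued.

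A direct inspection of $M$ then gives $M[\alpha|\gamma]=X^\sigma_\gamma[\alpha|\gamma]$ (the $k\times k$ block only sees the $X$-columns, which are precisely the columns of $X^\sigma_\gamma$), together with $M(\alpha|\gamma)=A(\alpha|\gamma)$ (deleting the columns in $\im\gamma$ removes every $X$-contribution and leaves a genuine submatrix of $A$). Consequently,
\[
d_\chi(M) = \sum_{\alpha\in Q_{k,n}} d_\chi\left(X^\sigma_\gamma[\alpha|\gamma]\bigoplus_{\alpha|\gamma}A(\alpha|\gamma)\right).
\]

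Substituting this back into the First Expression and renaming $\gamma\to\beta$ yields the stated identity. The particular case $X^1=\ldots=X^k=X$ is then immediate: $X^\sigma_\beta[\alpha|\beta]=X[\alpha|\beta]$ for every $\sigma\in S_k$, so the sum over $\sigma$ contributes exactly the factor $k!$. The only non-routine step is justifying the column form of Laplace; the rest is bookkeeping about which entries survive the restrictions $[\alpha|\beta]$ and $(\alpha|\beta)$.
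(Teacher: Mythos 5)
Your proof is correct and follows essentially the same route as the paper's: reduce to the First Expression, apply the Laplace expansion to each summand $d_\chi A(\beta;X^{\sigma(1)},\ldots,X^{\sigma(k)})$ with the columns $\beta$ held fixed, and identify the two blocks $M[\alpha|\beta]=X^\sigma_\beta[\alpha|\beta]$ and $M(\alpha|\beta)=A(\alpha|\beta)$. The one place you go beyond the paper is in explicitly flagging and justifying the column form of the Laplace expansion (the paper invokes \eqref{Laplace_immanants}, which is stated with the rows fixed, without comment); both of your justifications --- rerunning the partition argument with $\sigma^{-1}(\im\gamma)$, or transposing and using that the irreducible characters of $S_n$ are real valued so $d_\chi(B^T)=d_\chi(B)$ --- are sound.
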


\begin{proof}
We have proved that 
$$D^k d_\chi A(X^1,\ldots ,X^k)=\sum_{\sigma \in S_k}\sum _{ \beta \in Q_{k,n}}d_\chi A(\beta; X^{\sigma(1)},\ldots ,X^{\sigma(k)}).$$
By the Laplace expansion for immanants, for every  $\beta \in Q_{k,n}$, we have that
\begin{eqnarray*}
&& d_\chi A(\beta; X^{\sigma(1)},\ldots ,X^{\sigma(k)}) =\\
&& = \sum _{\alpha \in Q_{k,n}}d_\chi( A ( \beta; X^{\sigma(1)},\ldots ,X^{\sigma(k)})\{ \alpha|\beta\})\\
&& = \sum _{\alpha \in Q_{k,n}}d_\chi( A ( \beta; X^{\sigma(1)},\ldots ,X^{\sigma(k)})[ \alpha|\beta]\bigoplus_{\alpha | \beta} A( \beta; X^{\sigma(1)},\ldots ,X^{\sigma(k)})( \alpha|\beta) ).
\end{eqnarray*}
Now we just notice that
$$A ( \beta; X^{\sigma(1)},\ldots ,X^{\sigma(k)})[ \alpha|\beta]=X^\sigma_\beta [ \alpha | \beta ]$$ and 
$$A ( \beta; X^{\sigma(1)},\ldots ,X^{\sigma(k)})( \alpha|\beta)=A(\alpha| \beta).$$
This concludes the proof of the formula.\end{proof}

\section{Formulas for the $k$-th derivative for the $m$-th $\chi$-symmetric tensor power}

In this section we wish to establish a formula for the $k$-th derivative of the $\chi$-symmetric tensor power of a matrix. Before we can do this, we need quite a bit of definitions, including the very definition of this matrix. \medskip

We start with some classical results that can be found in \cite[chapter 6]{Me}. Let $\chi$ be an irreducible character of $S_m$ and $$K_{\chi}=\dfrac{\chi(\id)}{m!}\displaystyle \sum_{\sigma \in S_m} \chi(\sigma)P(\sigma),$$ where $\id$ stands for the identity element of $S_m$. $K_{\chi}$ is a linear operator on $\otimes^m V$, and it is also an orthoprojector. Is is called a {\em symmetriser map}.
The range of $K_{\chi}$ is called the {\em symmetry class of tensors}\/ associated with the irreducible character $\chi$ and it is represented by $V_{\chi}=K_\chi(\otimes^m V)$.

It is well known that the alternating character $\chi(\sigma)=\varepsilon_{\sigma}$ (sign of the permutation $\sigma$) leads to the symmetry class $\wedge^m V$; on the other hand the principal character $\chi(\sigma)\equiv 1$ leads to the symmetry class $\vee^m V$.

Given a symmetriser map $K_{\chi}$, we denote 
$$v_1\ast v_2 \ast\ldots  \ast v_m=K_{\chi}(v_1 \otimes v_2 \otimes\ldots  \otimes v_m).$$
These vectors belong to $V_{\chi}$ and are called {\em decomposable symmetrised tensors}.

Let $\Gamma_{m,n}$ be the set of all maps from the set $ \lbrace 1,\ldots ,m\rbrace$ into the set $ \lbrace 1,\ldots ,n\rbrace$. This set can also be identified with the collection of multiindices $\lbrace (i_1,\ldots ,i_m): i_j \leq n \rbrace$. If $\alpha \in\Gamma_{m,n}$, this correspondence associates to $\alpha$ the $m$-tuple $(\alpha(1),\ldots ,\alpha(m))$. In the set $\Gamma_{m,n}$ we will consider the lexicographic order.
The set 
$$ \lbrace \alpha \sigma : \sigma \in S_m \rbrace \subseteq \Gamma_{m,n}$$
is the orbit of $\alpha.$
The group $S_m$ acts on $\Gamma_{m,n}$ by the action $(\sigma, \alpha) \longrightarrow \alpha \sigma^{-1}$ where $\sigma \in S_m$ and $\alpha \in \Gamma_{m,n}$. The stabiliser of $\alpha$ is the subgroup of $S_m$ defined as $$ G_{\alpha}=\lbrace \sigma \in S_m : \alpha \sigma= \alpha \rbrace.$$
Let $\lbrace e_1,\ldots ,e_n \rbrace$ be an orthonormal basis of the vector space $V$. Then $$\lbrace e_{\alpha}^\otimes=e_{\alpha(1)}\otimes e_{\alpha(2)}\otimes\ldots  \otimes e_{\alpha(m)}: \alpha \in \Gamma_{m,n} \rbrace$$ is a basis of the $m$-th tensor power of $V$. So the set of all decomposable symmetrised tensors spans $V_{\chi}$. However, this set need not to be a basis of $V_{\chi}$, because its elements might not be linearly independent, some of them may even be zero. Let 
\begin{equation}
\label{omega_chi}
\Omega =\Omega_{\chi}= \lbrace \alpha \in \Gamma_{m,n} : \sum_{\sigma \in G_{\alpha}}\chi(\sigma) \neq 0 \rbrace.
\end{equation}
With simple calculations, we can conclude that 
\begin{equation} 
\Vert e_{\alpha}^{\ast}\Vert^2 = \dfrac{\chi(\id)}{m!}\sum_{\sigma \in G_{\alpha}}\chi(\sigma).
\label{norma-tensores}
\end{equation}
So the nonzero decomposable symmetrised tensors are $\lbrace e_{\alpha}^{\ast}: \alpha \in \Omega \rbrace.$
Now, let $\Delta$ be the system of distinct representatives for the quocient set
$\Gamma_{m,n}/{S_m}$, constructed by choosing the first element in each orbit, for the lexicographic order of indices. It is easy to check that $\Delta \subseteq G_{m,n}$, where $G_{m,n}$ is the set of all increasing sequences of $\Gamma_{m,n}.$ Let $$\overline{\Delta}=\Delta\cap \Omega.$$ It can be proved that the set $\lbrace e_{\alpha}^{\ast}: \alpha \in \overline{\Delta} \rbrace$ is linearly independent. We have already seen that the set $\lbrace e_{\alpha}^{\ast}: \alpha \in \Omega \rbrace,$ spans $V_{\chi}$, so there is a set $\widehat{\Delta}$, such that $\overline{\Delta}\subseteq \widehat{\Delta} \subseteq \Omega$ and 
\begin{equation}
{\cal E}':=\lbrace e_{\alpha}^{\ast}: \alpha \in \widehat{\Delta} \rbrace,
\end{equation}
is a basis for $V_{\chi}$. It is also known that this basis is orthogonal if $\chi$ is a linear character.\medskip

In general, if $\chi$ does not have degree one, there are no known orthonormal bases of $V_\chi(S_m)$ formed by decomposable symmetrised tensors.  Let $ \mathcal{E}=(v_\alpha: \alpha \in \widehat \Delta)$ be the orthonormal basis of the  $m$-th $\chi$-symmetric tensor power of the vector space $V$ obtained by applying the Gram-Schmidt orthonormalization procedure to $\cal E'$. Let $B$ be the $t \times t$ change of basis matrix, from $\mathcal{E}$ to $\mathcal{E'}=(e^*_{\alpha}:\alpha \in \widehat{\Delta})$. This means that for each $\alpha\in \widehat \Delta$, 
$$v_\alpha = \sum_{\gamma \in \widehat \Delta} b_{\gamma\alpha} e^*_\gamma.$$

We note that this matrix $B$ {\em does not depend on the choice of the orthonormal basis of $V$}, since the set $\widehat \Delta$ is independent of the vectors, and has a natural order (the lexicographic order), which the basis ${\cal E}$ inherits. Moreover, the Gram-Schmidt process only depends on the numbers $\langle e_\alpha^*,e_\beta^*\rangle$, and, by \cite[p.\ 163]{Me}, these are given by formula 
$$\langle e_\alpha^*, e_\beta^* \rangle = \frac{\chi(\id)}{m!} \sum_{\sigma \in S_m} \chi(\sigma) \prod_{t=1}^m \langle e_{\alpha(t)}, e_{\beta\sigma(t)} \rangle.$$
% = \frac{\chi(\id)}{m!} \sum_{\sigma \in S_m} \chi(\sigma) \prod_{t=1}^m \delta_{\alpha(t),\beta\sigma(t)} 
Hence, they only depend on the values of $\langle e_i,e_j\rangle = \delta_{ij}$ and are thus independent of the vectors themselves. \medskip

It is known that if $T$ is a linear operator on V, then $V_\chi$ is invariant for its $m$-th fold tensor power $\otimes^mT$. Thus, the $\chi$-symmetric power $T$, denoted by $K_\chi(T)$ is defined as the restriction of $\otimes^m T$ to $V_\chi$. There is a close connection of this $\chi$-symmetric tensor power of $T$ and the immanant, as the following result already shows (this is a rephrasing of \cite[p.\ 230]{Me}).

\begin{theorem}
Suppose $\chi$ is an irreducible character of the group $S_m$. Let $E=\lbrace e_1,\ldots ,e_n \rbrace$ be an orthonormal basis of the inner product space $V$. Let $T \in L(V,V)$ be the unique linear operator such that $M(T,E)=A.$

If $\alpha, \beta \in \Gamma_{m,n}$, then 
\begin{equation}
\langle K_{\chi}(T) 
(e^*_{\alpha}),e^*_{\beta} \rangle = \frac{\chi(\id)}{m!} d_{\chi}(A^T[ \alpha | \beta ]).
\label{entries_chipower}
\end{equation}
\end{theorem}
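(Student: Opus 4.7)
The plan is to convert the inner product of symmetrised tensors into a sum over $S_m$ and identify that sum with the immanant on the right. First I would use that $K_\chi$ is an orthoprojector onto $V_\chi$, so $e_\beta^{\ast} = K_\chi(e_\beta^\otimes)$ and $K_\chi$ is self-adjoint; together with the invariance of $V_\chi$ under $\otimes^m T$ (so $K_\chi(T)(e_\alpha^{\ast}) = (\otimes^m T)(e_\alpha^{\ast})$ by definition of $K_\chi(T)$), this yields
\[
\langle K_\chi(T)(e_\alpha^{\ast}),\,e_\beta^{\ast}\rangle = \langle (\otimes^m T)(e_\alpha^{\ast}),\,K_\chi(e_\beta^\otimes)\rangle = \langle K_\chi\bigl((\otimes^m T)(e_\alpha^{\ast})\bigr),\,e_\beta^\otimes\rangle = \langle (\otimes^m T)(e_\alpha^{\ast}),\,e_\beta^\otimes\rangle,
\]
since $(\otimes^m T)(e_\alpha^{\ast})\in V_\chi$ is fixed by $K_\chi$.

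Second, I would unpack $e_\alpha^{\ast} = \frac{\chi(\id)}{m!}\sum_{\sigma}\chi(\sigma)\,P(\sigma)e_\alpha^\otimes = \frac{\chi(\id)}{m!}\sum_\sigma \chi(\sigma)\,e_{\alpha\sigma^{-1}}^\otimes$, apply $\otimes^m T$ by means of $Te_j = \sum_i a_{ij}e_i$, and then pair with $e_\beta^\otimes$. Orthonormality of the basis $\{e_\gamma^\otimes : \gamma\in\Gamma_{m,n}\}$ collapses the intermediate sum over $\gamma$ to the single term $\gamma=\beta$, producing
\[
\langle K_\chi(T)(e_\alpha^{\ast}),\,e_\beta^{\ast}\rangle = \frac{\chi(\id)}{m!}\sum_{\sigma\in S_m}\chi(\sigma)\prod_{t=1}^m a_{\beta(t),\,\alpha\sigma^{-1}(t)}.
\]

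Finally, I would compare with the definition of $d_\chi(A^T[\alpha|\beta])$. Writing $(A^T[\alpha|\beta])_{ij}=a_{\beta(j),\alpha(i)}$ and substituting $j=\tau(i)$ in the defining sum rewrites $d_\chi(A^T[\alpha|\beta])$ as $\sum_\tau \chi(\tau)\prod_j a_{\beta(j),\alpha\tau^{-1}(j)}$; after reindexing $\sigma\mapsto\sigma^{-1}$ in the display above, and using that every irreducible character of $S_m$ is real-valued (indeed integer-valued, since every permutation of $\{1,\ldots,m\}$ is conjugate to its inverse), the two sums coincide, giving the stated identity. The only delicate point is the bookkeeping: the action $P(\sigma)e_\alpha^\otimes = e_{\alpha\sigma^{-1}}^\otimes$ places $\sigma^{-1}$ on the column index and $\beta$ on the row index in the product, which is precisely what forces the transpose of $A$ to appear in the formula.
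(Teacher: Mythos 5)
Your proof is correct. The paper itself gives no argument for this theorem --- it is stated as a rephrasing of a result from Merris's \emph{Multilinear Algebra} --- and your computation is exactly the standard one behind that citation: reduce $\langle K_\chi(T)e_\alpha^*, e_\beta^*\rangle$ to $\langle (\otimes^m T)e_\alpha^*, e_\beta^\otimes\rangle$ via self-adjointness and idempotence of the orthoprojector $K_\chi$, expand over the orthonormal basis $\{e_\gamma^\otimes\}$, and match the resulting sum $\frac{\chi(\id)}{m!}\sum_\sigma \chi(\sigma)\prod_t a_{\beta(t),\alpha\sigma^{-1}(t)}$ with $\frac{\chi(\id)}{m!}d_\chi(A^T[\alpha|\beta])$. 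One small remark: with $(A^T[\alpha|\beta])_{ij}=a_{\beta(j),\alpha(i)}$ the substitution $j=\tau(i)$ already makes the two sums coincide term by term, so the final reindexing $\sigma\mapsto\sigma^{-1}$ and the appeal to the real-valuedness of characters of $S_m$ are superfluous (though harmless, and they would be needed under the opposite convention $P(\sigma)e_\alpha^\otimes=e_{\alpha\sigma}^\otimes$).
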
 

As we came across the immanant of a transpose, we note that $d_\chi(A^T) = d_{\overline{\chi}} (A)$, where $\overline\chi$ is also an irreducible character of $S_m$, defined as $\overline{\chi}(\sigma):=\overline{\chi(\sigma)}$. 
\begin{eqnarray*}
d_\chi(A^T) & = & \sum_{\sigma\in S_m} \chi(\sigma) \prod_{i=1}^m (A^T)_{i\sigma(i)}\\
& = & \sum_{\sigma\in S_m} \chi(\sigma) \prod_{i=1}^m a_{\sigma(i) i}\qquad (i=\sigma^{-1}(j))\\
& = & \sum_{\sigma\in S_m} \chi(\sigma) \prod_{j=1}^m a_{j \sigma^{-1}(j)} \qquad (\sigma=\tau^{-1})\\
& = & \sum_{\tau\in S_m} \chi(\tau^{-1}) \prod_{j=1}^m a_{j \tau(j)}\\
& = & \sum_{\tau\in S_m} \overline{\chi(\tau)} \prod_{j=1}^m a_{j \tau(j)}\\
& = & d_{\overline\chi}(A)
\end{eqnarray*}

*** $\overline{\chi}$ tem o mesmo $\widehat\Delta$ que o $\chi$? [E' so' curiosidade]\\

Now we want to define $K_{\chi}(A)$, the $m$-th $\chi$-symmetric tensor power of the matrix $A$. A natural way to do this is to fix an orthonormal basis $E$ in $V$, and consider the linear endomorphism $T$ such that $A=M(T,E)$. Then the basis $\cal E$ is orthonormal and one can define 
$$K_\chi(A):=M(K_\chi(T),{\cal E})$$
The matrix $K_\chi(A)$ has order $t=|\widehat \Delta|$, with  $|Q_{m,n}| \leq t$.\medskip

It is important to notice that {\em this matrix does not depend on the choice of the orthonormal basis $E$ of $V$}. This is an immediate consequence of the formula \eqref{entries_chipower}: for $\alpha, \beta \in \widehat{\Delta}$, the $(\alpha, \beta)$ entry of $K_{\chi}(A)$ is
\begin{eqnarray*}
\left\langle K_{\chi}(T)v_{\beta},v_{\alpha}\right\rangle & = &
\displaystyle \sum_{\gamma ,  \delta \in \widehat{\Delta}}\left\langle b_{\gamma\beta}K_{\chi}(T)e^*_{\gamma},b_{\delta\alpha}e^*_{\delta}\right\rangle \\
& = & \displaystyle \sum_{\gamma ,  \delta \in \widehat{\Delta}}b_{\gamma\beta} \overline{b_{\delta\alpha}}\left\langle K_{\chi}(T)e^*_{\gamma},e^*_{\delta}\right\rangle \\
& = & \frac{\chi(\id)}{m!}\displaystyle \sum_{\gamma,\delta \in \widehat{\Delta}}b_{\gamma\beta} \overline{b_{\delta\alpha}}d_{\chi}(A^T[\gamma | \delta])\\
& = & \frac{\chi(\id)}{m!}\displaystyle \sum_{\gamma,\delta \in \widehat{\Delta}}b_{\gamma\beta} \overline{b_{\delta\alpha}}d_{\chi}(A[ \delta|\gamma]^T)\\
& = & \frac{\chi(\id)}{m!}\displaystyle \sum_{\gamma,\delta \in \widehat{\Delta}}b_{\gamma\beta} \overline{b_{\delta\alpha}}d_{\overline{\chi}}(A[\delta | \gamma ]).
\end{eqnarray*}

This definition admits, as special cases, the $m$-th compound and the $m$-th induced power of a matrix, as defined in \cite[p.\ 236]{Me}. The matrix $K\chi(A)$ is called the {\em induced matrix\ } in \cite[p.\ 235]{Me}, in the case when the character has degree one.\medskip 

Denote by $\imm_{\chi} (A)$ the square matrix with rows and columns indexed by $\widehat \Delta$, whose $(\gamma, \delta)$ entry is $d_{\chi}(A[ \gamma | \delta])$ (one could call the elements of this matrix {\em immanantal minors\ }�indexed by $\widehat \Delta$, the usual minors are gotten by considering the alternating character, in which case $\widehat \Delta= Q_{m,n}$). With this definition, we can rewrite the previous equation as 
\begin{equation}
K_{\chi}(A)= \frac{\chi(\id)}{m!} B^* \imm_{\overline\chi} (A) B.
\label{deriv-adj}
\end{equation}

Finally, denote by $\miximm_{\chi}(X^1,\ldots,X^n)$ the square matrix with rows and columns indexed by $\widehat \Delta$, whose $(\gamma,\delta)$ entry is $\Delta_{\chi}( X^1[\gamma | \delta],\ldots ,X^n [ \gamma | \delta])$, so that $\miximm_{\chi} (A,\ldots,A)=\imm_{\chi}(A)$. We use the same shorthand as with the mixed immanant: for $k\leq n$, 
$$\miximm_{\chi}(A;X^1,\ldots,X^k):=\miximm_\chi (A,\ldots,A,X^1,\ldots X^k)$$

Before our main formula, we recall a general result about derivatives, which follows from the definition. 

\begin{lma}
If $f$ and $g$ are two maps such that $f\circ g$ is well defined, with $g$ linear, then
$$D^k(f\circ g)(A)(X^1,\ldots,X^{k}) = D^k f(g(A)) (g(X^1),\ldots,g(X^k)).$$
\end{lma}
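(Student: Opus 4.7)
The plan is to prove the lemma directly from the definition of the $k$-th directional derivative given earlier in the paper, using the linearity of $g$ in an essential way. Since the formula is symbol-pushing from the definition, I would structure the argument as a short chain of equalities rather than a lengthy development.

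First, I would start from the definition applied to the composition $f\circ g$ at the point $A$ in the directions $X^1,\ldots,X^k$:
$$D^k(f\circ g)(A)(X^1,\ldots,X^k) = \frac{\partial^k}{\partial t_1\cdots\partial t_k}\Big|_{t_1=\cdots=t_k=0} f\bigl(g(A+t_1 X^1+\cdots+t_k X^k)\bigr).$$
The central observation is that because $g$ is linear, the argument of $f$ can be rewritten as
$$g(A+t_1X^1+\cdots+t_kX^k) = g(A) + t_1 g(X^1) + \cdots + t_k g(X^k).$$
Substituting this back and recognizing the right-hand side as the definition of $D^k f$ at the point $g(A)$ evaluated in the directions $g(X^1),\ldots,g(X^k)$ gives the claimed identity.

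The only thing that actually needs any care is the use of linearity: if $g$ were merely affine the same proof would still work (the constant offset is absorbed into the base point), but if $g$ were nonlinear one would genuinely need a Faà di Bruno--type formula instead. Since the hypothesis gives us linearity of $g$, there is no real obstacle here; the proof is essentially a one-line manipulation of the definition and can be presented as such.
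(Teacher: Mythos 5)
Your proof is correct and matches the paper's intent exactly: the paper states this lemma without proof, remarking only that it ``follows from the definition,'' and your argument --- applying the definition of $D^k$ to $f\circ g$ and using linearity of $g$ to pull $g(A+t_1X^1+\cdots+t_kX^k)$ apart into $g(A)+t_1g(X^1)+\cdots+t_kg(X^k)$ --- is precisely that omitted one-line verification.
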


\begin{theorem} \label{der_pot} According to our previous notation, we have
$$D^k K_{\chi}(A)(X^1,\ldots ,X^k)=\frac{\chi(\id)}{(m-k)!} B^* \miximm_{\overline{\chi}}(A;X^1,\ldots,X^k) B$$
and, using the notation we have already established, the $(\alpha,\beta)$ entry of this matrix is
$$\frac{\chi(\id)}{m!}\displaystyle \sum_{\gamma ,  \delta \in \widehat{\Delta}}b_{\gamma\beta} \overline{b_{\delta\alpha}}\displaystyle \sum_{\sigma \in S_k}\displaystyle \sum_{\rho, \tau \in Q_{k,m}}d_{\overline\chi}(X[\delta | \gamma]^{\sigma}_{\tau}  [ \rho| \tau]\bigoplus_{\rho| \tau} A[\delta|\gamma](\rho | \tau)).$$
\end{theorem}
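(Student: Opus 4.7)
The plan is to exploit formula~\eqref{deriv-adj}, namely $K_\chi(A)=\frac{\chi(\id)}{m!}B^*\imm_{\overline\chi}(A)B$. Since $B$ and $B^*$ are \emph{independent of $A$} (they come only from Gram-Schmidt on the inner products $\langle e_\alpha^*,e_\beta^*\rangle$, which depend only on $\chi$ and $\widehat\Delta$), we may pass the directional derivative across them:
$$D^kK_\chi(A)(X^1,\ldots,X^k)=\frac{\chi(\id)}{m!}B^*\bigl(D^k\imm_{\overline\chi}(A)(X^1,\ldots,X^k)\bigr)B.$$
So everything reduces to computing $D^k\imm_{\overline\chi}(A)$ entry by entry.

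For each pair $\gamma,\delta\in\widehat\Delta$, the $(\gamma,\delta)$-entry of $\imm_{\overline\chi}(A)$ is $d_{\overline\chi}(A[\gamma|\delta])$. The key observation is that the map $A\mapsto A[\gamma|\delta]$ is \emph{linear}, so by the preceding lemma on composition of derivatives,
$$D^k\bigl(d_{\overline\chi}(\,\cdot\,[\gamma|\delta])\bigr)(A)(X^1,\ldots,X^k)=D^kd_{\overline\chi}(A[\gamma|\delta])(X^1[\gamma|\delta],\ldots,X^k[\gamma|\delta]).$$
Now I invoke the \emph{First expression, rewritten}, applied to the $m\times m$ matrix $A[\gamma|\delta]$ and directions $X^i[\gamma|\delta]$:
$$D^kd_{\overline\chi}(A[\gamma|\delta])(X^1[\gamma|\delta],\ldots,X^k[\gamma|\delta])=\frac{m!}{(m-k)!}\Delta_{\overline\chi}(A[\gamma|\delta];X^1[\gamma|\delta],\ldots,X^k[\gamma|\delta]).$$
By the definition of $\miximm_{\overline\chi}$, the right-hand side is exactly $\frac{m!}{(m-k)!}$ times the $(\gamma,\delta)$-entry of $\miximm_{\overline\chi}(A;X^1,\ldots,X^k)$. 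Reassembling the matrix product yields
$$D^kK_\chi(A)(X^1,\ldots,X^k)=\frac{\chi(\id)}{m!}\cdot\frac{m!}{(m-k)!}B^*\miximm_{\overline\chi}(A;X^1,\ldots,X^k)B,$$
which is the first claim.

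For the explicit entry-wise form, I expand the triple product $B^*MB$ at the $(\alpha,\beta)$ position: it equals $\sum_{\gamma,\delta\in\widehat\Delta}\overline{b_{\delta\alpha}}\,b_{\gamma\beta}\,M_{\delta\gamma}$. Substituting $M_{\delta\gamma}=D^kd_{\overline\chi}(A[\delta|\gamma])(X^1[\delta|\gamma],\ldots,X^k[\delta|\gamma])$ (this time keeping the full $D^k$ and the $\frac{\chi(\id)}{m!}$ outside), and then applying the \emph{Second Formula} (Theorem~\ref{formula2}) to each such inner derivative, with $n$ replaced by $m$ and with the matrices $A$, $X^i$ replaced by their $[\delta|\gamma]$-submatrices, produces exactly
$$\sum_{\sigma\in S_k}\sum_{\rho,\tau\in Q_{k,m}}d_{\overline\chi}\bigl(X[\delta|\gamma]^{\sigma}_{\tau}[\rho|\tau]\,\textstyle\bigoplus_{\rho|\tau}A[\delta|\gamma](\rho|\tau)\bigr).$$
Collecting with $\frac{\chi(\id)}{m!}\sum_{\gamma,\delta}b_{\gamma\beta}\overline{b_{\delta\alpha}}(\,\cdot\,)$ gives the stated entry formula.

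The only real care needed is notational: one must be sure that ``submatrix then mixed immanant'' equals ``mixed immanant then substitute submatrices'', which is immediate from the linearity of the restriction map and the definition of $\Delta$; and that the inner Second-Formula construction $X[\delta|\gamma]^\sigma_\tau$ is applied to the restricted matrices of size $m$ (hence indices $\rho,\tau\in Q_{k,m}$, not $Q_{k,n}$). No step is conceptually hard; the main obstacle is just tracking which indices live in $\widehat\Delta$ (outer, governing $B$) versus $Q_{k,m}$ (inner, coming from the Second Formula on $m\times m$ submatrices).
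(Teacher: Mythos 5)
Your proposal is correct and follows essentially the same route as the paper: both arguments use the linearity of the restriction map $A\mapsto A[\delta|\gamma]$ together with the composition lemma, then invoke the rewritten first expression \eqref{eq_disc} to produce the factor $\frac{m!}{(m-k)!}\Delta_{\overline\chi}$ and identify the result with $\miximm_{\overline\chi}$, and finally apply Theorem~\ref{formula2} for the explicit entry formula. Your framing of the first step as passing $D^k$ through the constant linear map $M\mapsto B^*MB$ is just a cleaner packaging of what the paper does entry by entry.
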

\begin{proof} Notice that the map $A\mapsto A[\delta|\gamma]$ is linear, so we can apply the previous lemma in order to compute the derivatives of the entries of the matrix $K_{\chi}(A)$. The $(\alpha,\beta)$ entry of the $k$-th derivative of the $m$-th $\chi$-symmetric tensor power of $A$, i.e., the $(\alpha,\beta)$ entry of the matrix $D^k K_{\chi}(A)(X^1,\ldots ,X^k)$ is:
$$
 \frac{\chi(\id)}{k!}\sum_{\gamma ,\delta \in \widehat{\Delta}}b_{\gamma\beta} \overline{b_{\delta\alpha}}D^kd_{\overline\chi}(A[\delta | \gamma] )(X^1[\delta | \gamma] ,\ldots ,X^k[\delta | \gamma]).$$

To abbreviate notation, for fixed $\gamma, \delta\in \widehat \Delta$, we will write $C:=A[ \delta | \gamma]$, and $Z^i:=X^i[\delta | \gamma]$, $i=1,\ldots,k$. Using formula \eqref{eq_disc}, we get
\begin{eqnarray*} 
D^kd_{\overline\chi}(A[\delta | \gamma] )(X^1[\delta | \gamma] ,\ldots ,X^k[\delta | \gamma]) & = & D^kd_{\overline\chi} (C)(Z^1,\ldots ,Z^k)\\
& = & \frac{m!}{(m-k)!} \Delta_{\overline\chi}(C;Z^1,\ldots ,Z^k).
\end{eqnarray*}
So the $(\alpha,\beta)$ entry of $D^k K_{\chi}(A)(X^1,\ldots ,X^k)$ is
\begin{eqnarray*}
\frac{\chi(\id)}{m!}\displaystyle \sum_{\gamma ,  \delta \in \widehat{\Delta}}b_{\gamma\beta} \overline{b_{\delta\alpha}}\frac{m!}{(m-k)!} \Delta_{\overline\chi}(C;Z^1,\ldots ,Z^k)=\\
\dfrac{\chi(\id)}{(m-k)!}\displaystyle \sum_{\gamma ,  \delta \in \widehat{\Delta}}b_{\gamma\beta} \overline{b_{\delta\alpha}} \Delta_{\overline\chi}(A[\delta | \gamma]; X^1[ \delta|\gamma],\ldots ,X^k [\delta|\gamma])
\end{eqnarray*}

According to the definition of $\miximm_{\overline{\chi}}(A;X^1,\ldots,X^k)$, we have
$$D^k K_{\chi}(A)(X^1,\ldots ,X^k)=\frac{\chi(\id)}{(m-k)!} B^* \miximm_{\overline{\chi}}(A;X^1,\ldots,X^k) B.$$

This establishes the first formula. For the entries of the matrix, we use the formula in theorem \ref{formula2}:
$$ D^k d_{\overline\chi}(C)(Z^1,\ldots ,Z^k) = \displaystyle \sum_{\sigma \in S_k}\displaystyle \sum_{\rho,\tau \in Q_{k,m}}d_{\overline\chi}(Z^{\sigma}_{\tau} [ \rho|\tau]\bigoplus_{\rho | \tau} C(\rho|\tau)),$$
recalling that
$$Z^\sigma_\tau = {\mathbf 0} (\tau; Z^{\sigma(1)},\ldots,Z^{\sigma(k)}),$$
where $\mathbf 0$ denotes the zero matrix of order $m$.

So, the $(\alpha,\beta)$ entry of the $k$-th derivative of $K_{\chi}(A)$ is:
\begin{eqnarray*}
\frac{\chi(\id)}{m!}\displaystyle \sum_{\gamma ,  \delta \in \widehat{\Delta}}b_{\gamma\beta} \overline{b_{\delta\alpha}}\displaystyle \sum_{\sigma \in S_k}\displaystyle \sum_{\rho, \tau \in Q_{k,m}}d_{\overline\chi}(Z^{\sigma}_{\tau} [ \rho| \tau]\bigoplus_{\rho| \tau} C(\rho | \tau))= \\ 
\frac{\chi(\id)}{m!}\displaystyle \sum_{\gamma ,  \delta \in \widehat{\Delta}}b_{\gamma\beta} \overline{b_{\delta\alpha}}\displaystyle \sum_{\sigma \in S_k}\displaystyle \sum_{\rho, \tau \in Q_{k,m}}d_{\overline\chi}(X[\delta | \gamma]^{\sigma}_{\tau} [ \rho| \tau]\bigoplus_{\rho| \tau} A[\delta|\gamma](\rho | \tau))
\end{eqnarray*} 
This concludes our proof.
\end{proof}

The formula obtained for the higher order derivatives of $K_{\chi}(A)(X^1,\ldots ,X^k)$ generalises the expressions obtained by Bhatia, Jain and Grover (\cite{BJ}, \cite{G}). We will prove this for the derivative of the $m$-th compound, establishing that, from the formula in theorem \ref{der_pot}, one can establish formula (2.5) in \cite{J}, from which the main formula for the derivative of the $m$-th compound of $A$ is obtained. 

So we take $\chi= \varepsilon$ then $$K_{\chi}(A)(X^1,\ldots ,X^k)=\wedge^m (A)(X^1,\ldots ,X^k).$$
In this case $\widehat{\Delta}=Q_{m,n}$ and the basis $\lbrace e^\wedge_ \alpha : \alpha \in Q_{m,n} \rbrace$ is orthogonal and it is easy to see (by direct computation or using formula \eqref{norma-tensores}) that every vector has norm $1/\sqrt{m!}$. So the matrix $B$ of order $\binom{n}{m}$ is diagonal and its diagonal entries are equal to $\sqrt{m!}$. 

We now notice two properties that we will use in our computations:\medskip

I. For any square matrices $X\in M_k(\C)$, $Y\in M_{n-k}(\C)$ and functions $\alpha,\beta\in Q_{k,n}$,
$$\det X\bigoplus_{\alpha|\beta} Y = (-1)^{|\alpha|+|\beta|} \det X \det Y.$$
This is a consequence of formula \eqref{gen-det-laplace}. We again notice that if $\gamma\neq \beta$, the matrices 
$$(X\bigoplus_{\alpha|\beta} Y)[\alpha|\gamma] \text{ and } (X\bigoplus_{\alpha|\beta} Y)(\alpha|\gamma)$$
have a zero column. Now, using the Laplace expansion for the determinant along $\alpha$,
\begin{eqnarray*}
\det X\bigoplus_{\alpha|\beta} Y & = & (-1)^{|\alpha|}\sum_{\gamma\in Q_{k,n}} (-1)^{|\gamma|} \det((X\bigoplus_{\alpha|\beta} Y)[\alpha|\gamma])\det((X\bigoplus_{\alpha|\beta} Y)(\alpha|\gamma))\\
& = & (-1)^{|\alpha|+|\beta|} \det((X\bigoplus_{\alpha|\beta} Y)[\alpha|\beta])\det((X\bigoplus_{\alpha|\beta} Y)(\alpha|\beta))\\
& = & (-1)^{|\alpha|+|\beta|} \det X\det Y.
\end{eqnarray*}

II. For $\alpha,\beta\in Q_{m,n}$ and $\rho,\tau\in Q_{k,m}$, we have 
$$\sum_{\sigma \in S_k} \det(X[ \alpha | \beta]^{\sigma}_{\tau} [ \rho| \tau]) = 
k! \Delta(X^1[ \alpha | \beta]  [ \rho| \tau], \ldots,X^k[ \alpha | \beta]  [ \rho| \tau])$$
To check this, consider the columns of the matrices involved. Remember that 
$$X[ \alpha | \beta]^{\sigma}_{\tau} = \mathbf{0}(\tau, X^{\sigma(1)}[\alpha|\beta],\ldots,X^{\sigma(k)}[\alpha|\beta]).$$

For given $\sigma\in S_k$ and $j\in [k]$, we have:\medskip

\begin{tabular}{rcl}
entry $(i,j)$ of $X[ \alpha | \beta]^{\sigma}_{\tau}[\rho|\tau]$ & = & entry $(\rho(i),\tau(j))$ of $X[ \alpha | \beta]^{\sigma}_{\tau}$\medskip\\
& = & entry $(\rho(i),\tau(j))$ of $X^{\sigma(j)}[\alpha | \beta]_{\tau(j)}$\medskip \\
& = & entry $(i,j)$ of $X^{\sigma(j)}[ \alpha | \beta][\rho|\tau]$.\medskip
\end{tabular}

Therefore, 
$$X[ \alpha | \beta]^{\sigma}_{\tau}[\rho|\tau] = [X^{\sigma(1)}[\alpha|\beta][\rho|\tau]_{[1]}\, \ldots \, X^{\sigma(k)}[\alpha|\beta][\rho|\tau]_{[k]})$$
and the matrices that appear in the first sum are the same as the ones that appear in the mixed discriminant. 
\medskip

We are now ready to prove the result. If we replace in theorem \ref{der_pot} $d_\chi=\det$, we have that that the $(\alpha,\beta)$ entry of $D^k \wedge^m (A)(X^1,\ldots ,X^k)$ is
\begin{eqnarray*}
&& \frac{1}{m!}\displaystyle \sum_{\gamma ,  \delta \in Q_{m,n} }b_{\gamma\beta} \overline{b_{\delta\alpha}}\displaystyle \sum_{\sigma \in S_k}\displaystyle \sum_{\rho, \tau \in Q_{k,m}}\det(X[\delta|\gamma]^{\sigma}_{\tau} [ \rho| \tau]\bigoplus_{\rho| \tau} A[\delta|\gamma](\rho | \tau))\\
&=&\frac{1}{m!}m! \sum_{\sigma \in S_k} \sum_{\rho, \tau \in Q_{k,m}}\det(X[\alpha | \beta]^{\sigma}_{\tau}  [ \rho| \tau]\bigoplus_{\rho| \tau} A[ \alpha | \beta](\rho | \tau))\\
&=&  \sum_{\sigma \in S_k}\displaystyle \sum_{\rho, \tau \in Q_{k,m}} (-1)^{|\rho|+|\tau|} \det(A[ \alpha | \beta](\rho | \tau)) \det(X[ \alpha | \beta]^{\sigma}_{\tau} [ \rho| \tau])\\
&=& k!\sum_{\rho, \tau \in Q_{k,m}} (-1)^{|\rho|+|\tau|} \det(A[ \alpha | \beta](\rho | \tau)) \Delta(X^1[ \alpha | \beta]  [ \rho| \tau], \ldots,X^k[ \alpha | \beta]  [ \rho| \tau])
\end{eqnarray*}

We denoted by $\Delta(B_1,...,B_n)$ the mixed discriminant. The formula we obtained is formula (2.5) in \cite{J}, if you take into account that in this paper the roles of the letters $k$ and $m$ are interchanged.\medskip

Using similar arguments one can obtain the formula for the $k$-th derivative of $\vee^m (A)(X^1,\ldots ,X^k)$ in \cite{G}.\medskip

{\bf Acknowledgements}. We would like to thank Prof.\ Rajendra Bhatia for presenting this problem to us and for all the suggestions and discussions we had on this subject. We would also like to thank Prof.\ Jos\'e Dias da Silva for sharing with us his understanding of these topics and for all the discussions we had. 
   
\bibliographystyle{plain}

\end{document}